\newtheorem{theorem}{Theorem}[section]   
\newtheorem{lemma}[theorem]{Lemma}         
\newtheorem{proposition}[theorem]{Proposition}  
\newenvironment{remark}{{\bf Remark.\ }\rm}{\bigskip}
\renewenvironment{abstract}{\begin{quote}{\bf Abstract.\ }\small}{\end{quote}\bigskip}
\newcommand{\bbinom}[2]{\genfrac{(}{)}{0pt}{0}{#1}{#2}}
\newcommand{\ff}[2]{\phi^{#1}_{#2}}
\newcommand{\FF}[2]{\Phi^{#1}_{#2}}
\def\I{{\rm I}}
\def\J{{\rm J}}
\def\B{\mathcal{B}}
\title[Sums of Squares for Krawtchouk]{Sums of squares of Krawtchouk polynomials, Catalan numbers, and some algebras over the Boolean lattice}
\author{Philip Feinsilver}
\address{Department of Mathematics\\ Southern Illinois University \\Carbondale, IL}
\email{phfeins@siu.edu}
\begin{document}
\maketitle
\thispagestyle{empty}
\date{}
\begin{abstract}
Writing the values of Krawtchouk polynomials as matrices, we consider weighted partial sums along columns. For the general case, we find
an identity that, in the symmetric case yields a formula for such partial sums. Complete sums of squares along columns involve
``Super Catalan" numbers. We look as well for particular values (matrix entries) involving the Catalan numbers. Properties
considered and developed in this work are applied to calculations of various dimensions that describe the structure of
some *-algebras over the Boolean lattice based on inclusion/superset relations expressed algebraically using zeons [zero-square elements].
\end{abstract}
{\footnotesize
{\tt Subject Classification[2010]: }{05A10, 06E25, 33C45, 42C05}\\
{\tt Keywords: }{Krawtchouk polynomials, Catalan numbers, Super Catalan numbers, Boolean lattice, *-algebras, zeons}
}
\begin{section}{Introduction}
Our approach to Krawtchouk polynomials is to consider the values as entries in corresponding matrices.
This makes it convenient to refer to their values, indexing, and associated properties. We begin with Krawtchouk	polynomials
for general parameter $p$ and derive an identity for partial sums of squares along a column. For the case $p=1/2$ this 
leads to evaluations of these sums. We also review some basic properties of Krawtchouk matrices that we will find useful.\\

Two related features are considered in detail. First is the ``Catalan connection" which appears when looking at complete sums of squares along columns.
As well, Catalan numbers appear as particular entries in Krawtchouk matrices. The second main feature is the use
of properties of Krawtchouk matrices in calculating dimensions of some algebras over the Boolean lattice. These arise when considering
the lattice of subsets of $\{1,2,\ldots,n\}$ as an algebra generated by ``zeons" --- commuting elements which square to zero.
These algebras are generated by the basic matrices corresponding to inclusion and superset, specifically, 
the regular representation of the zeon algebra.\\

References to Catalan numbers are readily available and abundant, so these have been skipped, but see \cite{GMT} for a relevant discussion
involving Super Catalan numbers. For sums of squares, we found
the results of \cite{D1} important, though we stress that the difference in
our approaches is substantial. Our approach to Krawtchouk	
polynomials follows \cite{FF,FK1}. For the Boolean connection, full details are presented in \cite{F1}. And we found the discussion in
\cite{Sag} especially valuable for background.

\end{section}
\begin{section}{Krawtchouk polynomials}
We modify the generating function given in \cite[18.23]{DLMF}, formula 18.23.3,
for convenience in computations as well as in point of view.
Throughout, we will use the parameter
$$ r=\frac{1-p}{p}$$
with $r=1$ corresponding to the symmetric case $p=1/2$. Note the relation
$$\frac{1}{p}=1+r$$
that we will find useful.\\

The generating function takes the form
\begin{equation}\label{eq:genf}
(1+z)^{N-j}(1-rz)^j=\sum_{n=0}^N z^n\,\phi_{nj}^{N}
\end{equation}
with slight changes in notation. In particular, we prefer the matrix form $\phi_{nj}^{N}$ for the values of the Krawtchouk polynomials at integer points, $0\le  j \le N$.

\begin{subsection}{Relations of Pascal type}
First we note some identities similar to Pascal's triangle for binomial coefficients. See \cite{FK1} for the case $r=1$.

\begin{proposition}\label{prop:Pascal}
The following identities of Pascal type hold for \hfill\break $N\ge 0$, $0\le j, n \le N$ :
\begin{align*}
\qquad \ff N{n  \;j} + \ff N{n-1\;j}   &=  \ff {N+1}{n\;j}    &&&\mathrm{(i)} \\
\mathstrut \nonumber\\
\qquad \ff N{n  \;j} -r\, \ff N{n-1  \;j} &= \ff {N+1}{n\;j+1}   &&&\mathrm{(ii)}
\end{align*}
with the boundary conditions $\ff N{-1\;j}=0$, $0\le  j \le N$, for $N\ge 0$. 
\end{proposition}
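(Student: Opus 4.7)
The plan is to derive both identities directly from the generating function \eqref{eq:genf}, by relating $(1+z)^{(N+1)-j}(1-rz)^{j}$ (resp.\ $(1+z)^{N-j}(1-rz)^{j+1}$) to $(1+z)^{N-j}(1-rz)^j$ via the factor $(1+z)$ (resp.\ $(1-rz)$), then matching coefficients of $z^n$ on both sides. The boundary condition $\ff N{-1\;j}=0$ is automatic from \eqref{eq:genf} since the right-hand series only contains non-negative powers of $z$, so index shifts of the form $\sum_n z^{n+1}\ff N{n\;j}=\sum_n z^n \ff N{n-1\;j}$ are legitimate with no boundary correction needed.

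For (i), I would start from the identity $(1+z)^{(N+1)-j}(1-rz)^{j}=(1+z)\cdot(1+z)^{N-j}(1-rz)^{j}$. Expanding the right-hand side using \eqref{eq:genf} gives
\[
\sum_{n=0}^{N+1}z^n\,\ff{N+1}{n\;j}=(1+z)\sum_{n=0}^{N}z^n\,\ff N{n\;j}=\sum_n z^n\ff N{n\;j}+\sum_n z^n\ff N{n-1\;j},
\]
and identifying coefficients of $z^n$ yields (i).

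For (ii), the analogous move is to write $(1+z)^{(N+1)-(j+1)}(1-rz)^{j+1}=(1-rz)\cdot(1+z)^{N-j}(1-rz)^{j}$. Using \eqref{eq:genf} on both sides,
\[
\sum_{n=0}^{N+1}z^n\,\ff{N+1}{n\;j+1}=(1-rz)\sum_{n=0}^{N}z^n\,\ff N{n\;j}=\sum_n z^n\ff N{n\;j}-r\sum_n z^n\ff N{n-1\;j},
\]
and matching coefficients of $z^n$ produces (ii).

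There is no real obstacle here: the argument is a one-line generating-function manipulation in each case. The only mild point of care is tracking the index ranges after the shift $n\mapsto n+1$ and confirming that the convention $\ff N{-1\;j}=0$ accommodates the $n=0$ term on the right-hand side of each comparison. Once this is noted, both Pascal-type relations fall out uniformly and symmetrically from the two linear factors $(1+z)$ and $(1-rz)$ present in the generating function.
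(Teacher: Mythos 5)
Your proposal is correct and follows exactly the paper's argument: multiply the generating function \eqref{eq:genf} by $(1+z)$ to obtain (i) and by $(1-rz)$ to obtain (ii), then compare coefficients of $z^n$. You have simply written out the coefficient-matching and the index-shift convention that the paper leaves implicit.
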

\begin{proof}
The first relation follows upon multiplication of the generating function \eqref{eq:genf} by $1+z$. The second follows similarly using
the factor $1-rz$. \hfill
\end{proof}
\end{subsection}

\begin{subsection}{Recurrence formula}
The second ingredient needed is a recurrence formula for Krawtchouk polynomials. From \cite[18.22]{DLMF}, formula 18.22.12, 
Difference Equations in $x$,
we have, replacing $x$ by $j$ and rearranging:
$$p(N-j)\ff N{n\;j+1}+qj \ff N{n\;j-1}=(Np+(q-p)j-n)\ff N{n\;j} $$
where we introduce the notation $q=1-p$ for convenience, with $r=q/p$. Dividing through by $p$ and noting $r=q/p$, $1/p=1+r$, let us state
\begin{lemma}\label{lem:recj}
We have the recurrence in $j$
$$
(N+(r-1)j-n(1+r))\ff N{n\;j}=(N-j)\ff N{n\;j+1}+rj \ff N{n\;j-1} \ .
$$
\end{lemma}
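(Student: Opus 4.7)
The plan is a direct algebraic rearrangement of the cited DLMF difference equation 18.22.12. After the substitution $x\mapsto j$ the DLMF identity, as displayed in the excerpt, reads
\[
p(N-j)\,\ff N{n\;j+1} + qj\,\ff N{n\;j-1} = \bigl(Np + (q-p)j - n\bigr)\ff N{n\;j}.
\]
Since the paper's $\ff N{nj}$ differs from DLMF's Krawtchouk values only by the $j$-independent factor $\binom{N}{n}$ (coming from the normalization of the generating function \eqref{eq:genf}), this equation carries over verbatim to the paper's convention.

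First I would divide both sides by $p$. On the left, the coefficient of $\ff N{n\;j-1}$ becomes $(q/p)j = rj$, using $r=q/p$. On the right, I would convert each term individually: $Np/p = N$; $(q-p)j/p = (q/p-1)j = (r-1)j$; and $n/p = n(1+r)$, using the relation $1/p = 1+r$ highlighted just before the lemma. This turns the right-hand bracket into $N+(r-1)j-n(1+r)$, multiplied by $\ff N{nj}$. Transposing to put the $\ff N{nj}$ term on the left yields exactly the statement.

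As a sanity check (and an alternative derivation in case I wish to avoid invoking DLMF directly), I would verify the recurrence at the generating-function level from $F_j(z) := (1+z)^{N-j}(1-rz)^j$. The ratio identities $(1+z)F_{j+1}=(1-rz)F_j$ and $(1-rz)F_{j-1}=(1+z)F_j$ (each an immediate form of Proposition \ref{prop:Pascal}) combined with the simple consequence $(1+z)(1-rz)\,\partial_z F_j = [(N-j)(1-rz)-rj(1+z)]F_j$ yield, after extracting the coefficient of $z^n$ and simplifying, precisely the same three-term relation in $j$.

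The only real obstacle is bookkeeping: keeping track of signs and of which coefficient picks up the factor $1/p$ under the division. There is no conceptual difficulty, which is why the result is stated as a clean reference lemma for later use.
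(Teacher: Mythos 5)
Your main argument is exactly the paper's proof: the paper likewise displays DLMF 18.22.12 with $x\mapsto j$ and then simply ``divides through by $p$'' using $r=q/p$ and $1/p=1+r$, which is precisely the term-by-term conversion you carry out, so the proposal is correct and essentially identical in approach. (One minor caveat on your optional sanity check: extracting the coefficient of $z^n$ from $(1+z)(1-rz)\partial_z F_j=[(N-j)(1-rz)-rj(1+z)]F_j$ most naturally produces a three-term recurrence in $n$, not $j$; to recover the $j$-recurrence you would instead want to verify $(N+(r-1)j)F_j-(1+r)z\partial_z F_j=(N-j)F_{j+1}+rjF_{j-1}$ directly from the ratio identities.)
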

\end{subsection}
\end{section}

\begin{section}{Sums of Squares}
We now derive our main result.
\begin{subsection}{Sums of squares for general $r$}
Multiply equations (i) and (ii) of Proposition \ref{prop:Pascal}, for $N \rightarrow N-1$, to get
\begin{align*}
\ff {N}{n\;j+1}\ff {N}{n\;j} &=(\ff {N-1}{n\;j})^2-r (\ff {N-1}{n-1\;j})^2+(1-r)\ff {N-1}{n\;j}\ff {N-1}{n-1\;j}\\
&=(\ff {N-1}{n\;j})^2-r (\ff {N-1}{n-1\;j})^2+\frac{1-r}{2}\left[(\ff {N}{n\;j})^2-(\ff {N-1}{n\;j})^2-(\ff {N-1}{n-1\;j})^2\right]\\
&=\frac{1-r}{2}(\ff {N}{n\;j})^2+\frac{1+r}{2}\left[(\ff {N-1}{n\;j})^2-(\ff {N-1}{n-1\;j})^2\right]
\end{align*}
where in the second line we use Prop \ref{prop:Pascal}, (i), in the elementary identity $ab/2=(a+b)^2-a^2-b^2$. 
A similar formula holds replacing $j\rightarrow j-1$. Now multiply through the relation in Lemma \ref{lem:recj} by $\ff {N}{n\;j}$ to get
\begin{align*}
(N&+(r-1)j-n(1+r))(\ff N{n\;j})^2=(N-j)\ff N{n\;j+1}\ff N{n\;j}+rj \ff N{n\;j-1} \ff N{n\;j}\\
&=(N-j)\left(\frac{1-r}{2}(\ff {N}{n\;j})^2+\frac{1+r}{2}\left[(\ff {N-1}{n\;j})^2-(\ff {N-1}{n-1\;j})^2\right]\right)\\
&+rj\left(\frac{1-r}{2}(\ff {N}{n\;j-1})^2+\frac{1+r}{2}\left[(\ff {N-1}{n\;j-1})^2-(\ff {N-1}{n-1\;j-1})^2\right]\right)
\end{align*}
With the telescoping parts collapsing, we sum $n$ from $0$ to $m$ to get
\begin{align*}
\sum_{n=0}^m (N&+(r-1)j-n(1+r))(\ff N{n\;j})^2=\frac{1-r}{2}(N-j)\sum_{n=0}^m (\ff {N}{n\;j})^2+\frac{1+r}{2}(N-j) (\ff {N-1}{m\;j})^2      \\
&+rj\frac{1-r}{2}\sum_{n=0}^m (\ff {N}{n\;j-1})^2+rj\frac{1+r}{2}(\ff {N-1}{m\;j-1})^2
\end{align*}

Multiplying through by $2/(1+r)$ we arrive at our main formula.

\begin{theorem} \label{thm:sqsum} We have the sum of squares identity for Krawtchouk polynomials
\begin{align*}
\sum_{n=0}^m (N-2n)(\ff N{n\;j})^2&=(N-j)(\ff {N-1}{m\;j})^2+rj(\ff {N-1}{m\;j-1})^2\\
&\qquad +\frac{1-r}{1+r}\,j\,\sum_{n=0}^m \left(r(\ff N{n\;j-1})^2+(\ff N{n\;j})^2\right)
\end{align*}

\end{theorem}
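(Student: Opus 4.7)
The plan is to build the identity from two ingredients already in hand: the Pascal-type relations of Proposition \ref{prop:Pascal} and the $j$-recurrence of Lemma \ref{lem:recj}. The guiding idea is that everything in the theorem is quadratic in the values $\ff{N}{n\,j}$, so I want to convert every product that appears into a sum of squares, using the polarization identity $2ab=(a+b)^2-a^2-b^2$ together with Pascal (i) which expresses $\ff{N}{n\,j}$ as a sum of two $N-1$ values.

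First I would multiply the two relations of Proposition \ref{prop:Pascal}, taken at index $N-1$, to obtain an expression for the product $\ff{N}{n\,j+1}\ff{N}{n\,j}$ in terms of $(\ff{N-1}{n\,j})^2$, $(\ff{N-1}{n-1\,j})^2$ and the cross term $\ff{N-1}{n\,j}\ff{N-1}{n-1\,j}$. The cross term is then eliminated by applying the polarization identity to the product $\ff{N-1}{n\,j}\ff{N-1}{n-1\,j}$, recognizing (via Pascal (i)) that the sum of these two equals $\ff{N}{n\,j}$. The upshot is a clean formula
\begin{equation*}
\ff{N}{n\,j+1}\ff{N}{n\,j}=\tfrac{1-r}{2}(\ff{N}{n\,j})^2+\tfrac{1+r}{2}\bigl[(\ff{N-1}{n\,j})^2-(\ff{N-1}{n-1\,j})^2\bigr],
\end{equation*}
whose second piece telescopes in $n$. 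The analogous identity with $j\to j-1$ is obtained by the same manipulation.

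Next I would take the $j$-recurrence from Lemma \ref{lem:recj}, multiply both sides by $\ff{N}{n\,j}$, and substitute the two product expressions just derived into the right-hand side. This produces, for each fixed $n$, an equality whose left side is the coefficient $N+(r-1)j-n(1+r)$ times $(\ff{N}{n\,j})^2$ and whose right side is a combination of $(\ff{N}{n\,j})^2$, $(\ff{N}{n\,j-1})^2$, and the telescoping differences in the $(N-1)$-values. Summing $n$ from $0$ to $m$ collapses the telescoping parts to the boundary values at $n=m$ (the contribution at $n=-1$ vanishes by the boundary condition in Proposition \ref{prop:Pascal}).

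Finally I would multiply through by $2/(1+r)$ and rearrange. The only not-quite-automatic step is recognizing that after the normalization, the left-hand coefficient $\tfrac{2}{1+r}\bigl(N+(r-1)j-n(1+r)\bigr)$ differs from $N-2n$ by exactly $-\tfrac{1-r}{1+r}(N-j)$, independent of $n$; moving that $n$-independent piece into the right-hand side and combining with the surviving $\tfrac{1-r}{2}$ terms yields precisely the combination $\tfrac{1-r}{1+r}\,j\sum_{n=0}^{m}\bigl(r(\ff{N}{n\,j-1})^2+(\ff{N}{n\,j})^2\bigr)$ stated in the theorem. I expect the main obstacle to be this coefficient bookkeeping: the whole argument is a controlled cascade of substitutions, and the only place where care is required is making sure the factors $(N-j)$ and $rj$ coming from Lemma \ref{lem:recj} combine correctly with the $(1\pm r)/2$ factors from the polarization step so that the final left-hand coefficient reduces cleanly to $N-2n$.
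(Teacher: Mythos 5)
Your proposal follows the paper's proof essentially step for step: the same product of Pascal (i) and (ii) at level $N-1$, the same polarization $2ab=(a+b)^2-a^2-b^2$ combined with Pascal (i) to remove the cross term, the same multiplication of Lemma \ref{lem:recj} by $\ff{N}{n\;j}$, the same telescoping sum over $n$, and the same final normalization by $2/(1+r)$. The only slip is in your last bookkeeping claim: in fact $\tfrac{2}{1+r}\bigl(N+(r-1)j-n(1+r)\bigr)-(N-2n)=\tfrac{1-r}{1+r}(N-2j)$, not $-\tfrac{1-r}{1+r}(N-j)$; moving this $n$-independent piece to the right and combining it with the surviving term $\tfrac{1-r}{1+r}(N-j)\sum_{n}(\ff{N}{n\;j})^2$ produces the coefficient $\tfrac{1-r}{1+r}\bigl[(N-j)-(N-2j)\bigr]=\tfrac{1-r}{1+r}\,j$, which is precisely where the factor $j$ in the theorem comes from. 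With that constant corrected, your argument coincides with the paper's proof.
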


Dette \cite{D1} has similar formulas, his formula (d) for Krawtchouk polynomials is most similar to ours.
\end{subsection}
\end{section}
\begin{section}{Symmetric case}
Letting $r=1$ gives the symmetric case with generating function
\begin{equation}\label{eq:symmgf}
(1+z)^{N-j}(1-z)^j=\sum_{n=0}^N z^n\,\Phi_{nj}^{N}
\end{equation}
with the capital $\Phi$ denoting the values for this special case.\\

The recurrence is now
\begin{equation}\label{eq:rrec}
(N-2n)\FF N{n\;j}=(N-j)\FF N{n\;j+1}+j \FF N{n\;j-1} 
\end{equation}

\begin{subsection}{Basic properties}
Here we recall some basic properties of the Kravchuk matrices for $r=1$.\\

\begin{proposition}\label{prop:props}\hfill \\
\rm
\begin{enumerate}     
\item Row and column sign symmetries.
\begin{align*}
\FF{N}{i\;N-j}&=(-1)^i\,\FF{N}{i\;j}\\
\FF{N}{N-i\;j}&=(-1)^j\,\FF{N}{i\;j}\\
\FF{N}{N-i\;N-i}&=(-1)^N\,\FF{N}{i\;i}\\
\end{align*}

The first two follow readily from the generating function, the third follows from those.\\

For reference, here are the matrices for $N=3$ and $N=4$:
$$
\Phi^{3}=
       \left[\begin{array}{rrrr}
                      1 &  1 &  1  &  1 \cr
                      3 &  1 & -1  & -3 \cr
                      3 & -1 & -1  &  3 \cr
                      1 & -1 &  1  & -1 \cr 
             \end{array}\right]
\qquad
\Phi^{4}=
       \left[\begin{array}{rrrrr} 1 &  1 &  1  &  1  &  1 \cr
                      4 &  2 &  0  & -2  & -4 \cr
                      6 &  0 & -2  &  0  &  6 \cr
                      4 & -2 &  0  &  2  & -4 \cr
                      1 & -1 &  1  & -1  &  1 \cr \end{array}\right]
$$

\item First rows. First columns.
The entries in row $i=1$ follow as the coefficient of $z$ in the expansion of the generating function
eq. \eqref{eq:symmgf} yielding
$$\FF{N}{1\;j}=N-2j\ .$$
The entries in the first column are the binomial coefficients 
$$\FF{N}{n\;0}=\binom{N}{n}\ .$$
 For the second column, we set $j=0$ in Proposition \ref{prop:Pascal}, (ii) to get
\begin{align}\label{eq:columnone}
\Phi_{n\;1}^{N+1}&=\Phi_{n\;0}^N-\Phi_{n-1\;0}^N=\binom{N}{n}-\binom{N}{n-1}\\ 
&=\binom{N}{n}\,\frac{N+1-2n}{N+1-n}
\end{align}

\item Binomial conjugation.
The diagonal matrix $B$ is defined by
$$B_{ii}=\binom{N}{i}\ .$$
Using the fact that $\FF{N}{}B$ is symmetric (ref.  \cite{FF}), i.e., $\FF{N}{}B=B(\FF{N}{})^*$, it follows
\begin{equation}\label{eq:binom}
\FF{N}{j\;i}=\binom{N}{i}^{-1}\binom{N}{j}\,\FF{N}{i\;j}\ .
\end{equation}
\item Sum of squares along a column, see \cite{FF} proof of Theorem 3.1.3
\begin{equation}\label{eq:colsquares}
\sum_{i=0}^N (\FF{N}{i\;j})^2=\binom{2N-2j}{N-j}\binom{2j}{j}\biggm/\binom{N}{j}
\end{equation}
We will see that\\

The sum of squares of column $j/2$ of $\FF{m}{}$ is $(-1)^{j/2}\FF{2m}{m\;j}\ .$\\

\item Sum of squares along a row, see \cite{FF} proof of Lemma 3.3.9
\begin{equation}\label{eq:rowsquares}
\sum_{j=0}^N (\FF{N}{i\;j})^2=\sum_{k=0}^i \binom{N+1}{2k+1}\binom{2k}{k}\binom{N-2k}{i-k}
\end{equation}
\end{enumerate}
\end{proposition}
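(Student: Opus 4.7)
The proposition gathers five properties; (1) and (2) already receive partial justifications in the statement from the generating function $(1+z)^{N-j}(1-z)^j = \sum_n z^n \FF{N}{n\;j}$ and from Proposition \ref{prop:Pascal}(ii), respectively. To finish (1) I would make the substitutions $j \mapsto N-j$ (combined with $z \mapsto -z$) and $z \mapsto z^{-1}$ multiplied by $z^N$ in the generating function, then read off the signs; for (2) the first-row, first-column, and second-column formulas follow by setting $j=0$, extracting the $z^1$ coefficient, and applying Proposition \ref{prop:Pascal}(ii) with $j=0$ and simplifying, respectively. The plan below concentrates on (3), (4), and (5), which are the substantive parts.

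For the binomial conjugation (3), I would form the bivariate generating function
$$G(z,w) = \sum_{n,j} z^n w^j \binom{N}{j}\,\FF{N}{n\;j}.$$
Summing the inner $j$-series via the binomial theorem gives
$$G(z,w) = \sum_j \binom{N}{j}[w(1-z)]^j (1+z)^{N-j} = ((1+z)+w(1-z))^N = (1+z+w-wz)^N,$$
which is manifestly symmetric under $z \leftrightarrow w$. Comparing coefficients yields $\binom{N}{j}\,\FF{N}{n\;j} = \binom{N}{n}\,\FF{N}{j\;n}$, which is (3) after relabeling, equivalent to the symmetry of $\FF{N}{}B$.

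For the column sum of squares (4), put $f(z) = (1+z)^{N-j}(1-z)^j$ so that $\sum_i (\FF{N}{i\;j})^2 = [z^0]\,f(z)\,f(z^{-1})$. The identities $(1+z)(1+z^{-1}) = (1+z)^2/z$ and $(1-z)(1-z^{-1}) = -(1-z)^2/z$ simplify the product to $(-1)^j (1+z)^{2N-2j}(1-z)^{2j}/z^N$, whose constant term is $(-1)^j\,\FF{2N}{N\;2j}$; this proves the italicized remark at once. Identifying $(-1)^j\,\FF{2N}{N\;2j}$ with the super Catalan value $\binom{2N-2j}{N-j}\binom{2j}{j}/\binom{N}{j}$ is the remaining step. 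I would verify this by checking that both sides satisfy the same recurrence in $j$ (from Lemma \ref{lem:recj} with $N \mapsto 2N$, applied twice to advance the second index by two) with matching initial value $\binom{2N}{N}$ at $j=0$; alternatively one recognizes the coefficient as a Chu--Vandermonde/Saalsch\"utz-type sum. Showing that the closed binomial ratio is annihilated by the recurrence is the principal technical obstacle.

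For the row sum of squares (5), a diagonal extraction does the job. With $A = (1+y)(1+z)$ and $B = (1-y)(1-z)$,
$$\sum_j (\FF{N}{i\;j})^2 = [y^i z^i]\sum_{j=0}^N A^{N-j}B^j = [y^i z^i]\,\frac{A^{N+1}-B^{N+1}}{A-B},$$
and $A - B = 2(y+z)$. In the symmetric variables $u = y+z$, $v = yz$, one has $A = (1+v)+u$ and $B = (1+v)-u$, so $A^{N+1} - B^{N+1} = 2\sum_{k \ge 0} \binom{N+1}{2k+1}(1+v)^{N-2k}u^{2k+1}$, and dividing by $2u$ gives $\sum_{k \ge 0}\binom{N+1}{2k+1}(1+v)^{N-2k}(y+z)^{2k}$. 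The $y^i z^i$ coefficient is then captured by the central term $\binom{2k}{k}(yz)^k$ of $(y+z)^{2k}$ together with $\binom{N-2k}{i-k}(yz)^{i-k}$ from $(1+v)^{N-2k}$, producing exactly $\sum_k \binom{N+1}{2k+1}\binom{2k}{k}\binom{N-2k}{i-k}$. The main care point is the bookkeeping in the diagonal extraction, but no intrinsic obstacle arises beyond that.
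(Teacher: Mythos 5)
Your proposal is correct, but it takes a genuinely different (and more self-contained) route than the paper, which for the substantive items simply cites \cite{FF}: part (4) is referred to the proof of Theorem 3.1.3 there, part (5) to the proof of Lemma 3.3.9, and part (3) to the fact that $\FF{N}{}B$ is symmetric, with only one-line indications for (1) and (2). Your arguments all check out: the two substitutions $j\mapsto N-j,\ z\mapsto -z$ and $z\mapsto z^{-1}$ times $z^N$ do give the sign symmetries; the bivariate generating function $(1+z+w-wz)^N$ is symmetric in $z,w$ and yields the binomial conjugation; the constant-term computation $[z^0]f(z)f(z^{-1})=(-1)^j\FF{2N}{N\;2j}$ is right and has the bonus of proving the italicized ``We will see that'' claim directly; and the diagonal extraction in symmetric variables $u=y+z$, $v=yz$ correctly produces the row sum $\sum_k\binom{N+1}{2k+1}\binom{2k}{k}\binom{N-2k}{i-k}$. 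The one place where you overestimate the difficulty is the step you call the ``principal technical obstacle'' in (4): no recurrence verification is needed, since by part (3) of this same proposition (with $N\to 2N$) one has $\FF{2N}{N\;2j}=\binom{2N}{2j}^{-1}\binom{2N}{N}\FF{2N}{2j\;N}$, and $\FF{2N}{2j\;N}=[z^{2j}](1-z^2)^N=(-1)^j\binom{N}{j}$, so $(-1)^j\FF{2N}{N\;2j}=\binom{N}{j}\binom{2N}{N}\bigm/\binom{2N}{2j}=\frac{(2N-2j)!\,(2j)!}{(N-j)!\,j!\,N!}$, which is the stated ratio $\binom{2N-2j}{N-j}\binom{2j}{j}\bigm/\binom{N}{j}$; this is exactly the computation the paper performs later for eq.~\eqref{eq:Cat1} together with the Super Catalan remark. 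What your approach buys is a complete proof within the present framework, needing nothing from \cite{FF}; what the paper's citations buy is brevity.
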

\end{subsection}

We have a result for partial sums along a column without the squares:

\begin{theorem}\label{thm:sum}
For $j\ge2$, we have the partial sums
$$\sum_{n=0}^m (N-2n)\FF{N}{n\;j}=(N-j)\FF{N-1}{m\;j}+j\FF{N-1}{m\;j-2}=(N-1-2m)\FF{N-1}{m\;j-1}+\FF{N-1}{m\;j-2} \ .$$
\end{theorem}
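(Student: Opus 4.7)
The plan is to combine the recurrence \eqref{eq:rrec} with the ``minus" Pascal identity (ii) of Proposition \ref{prop:Pascal} (in its $r=1$ form) to convert the terms on the right-hand side of the recurrence into telescoping differences. Explicitly, the relation (ii) with $N\to N-1$ reads
$$\FF{N-1}{n\;j}-\FF{N-1}{n-1\;j}=\FF N{n\;j+1},$$
and shifting $j\to j-2$ gives
$$\FF{N-1}{n\;j-2}-\FF{N-1}{n-1\;j-2}=\FF N{n\;j-1}.$$
Here the assumption $j\ge 2$ is what guarantees that the shifted index $j-2$ lies in the allowed range.

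First I would substitute these two expressions into the recurrence \eqref{eq:rrec} to obtain
$$(N-2n)\FF N{n\;j}=(N-j)\bigl(\FF{N-1}{n\;j}-\FF{N-1}{n-1\;j}\bigr)+j\bigl(\FF{N-1}{n\;j-2}-\FF{N-1}{n-1\;j-2}\bigr).$$
Summing from $n=0$ to $m$ and using the boundary condition $\FF{N-1}{-1\;k}=0$, both sums on the right telescope to a single term, yielding
$$\sum_{n=0}^m(N-2n)\FF N{n\;j}=(N-j)\FF{N-1}{m\;j}+j\FF{N-1}{m\;j-2},$$
which is the first equality.

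For the second equality I would apply the recurrence \eqref{eq:rrec} (at level $N-1$, with index $j$ replaced by $j-1$) to the polynomial $\FF{N-1}{m\;j-1}$:
$$(N-1-2m)\FF{N-1}{m\;j-1}=(N-j)\FF{N-1}{m\;j}+(j-1)\FF{N-1}{m\;j-2}.$$
Adding $\FF{N-1}{m\;j-2}$ to both sides converts the right-hand side into $(N-j)\FF{N-1}{m\;j}+j\FF{N-1}{m\;j-2}$, matching the expression just obtained. No real obstacle is expected: the only subtlety is the bookkeeping of indices and the fact that the minus-Pascal identity requires $j-2\ge 0$, which accounts precisely for the hypothesis $j\ge 2$ in the statement.
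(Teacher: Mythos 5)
Your proof is correct and follows essentially the same route as the paper: both use Pascal identity (ii) at level $N-1$ to telescope the sums $\sum_n \FF{N}{n\;j\pm1}$ arising from the recurrence \eqref{eq:rrec}, and both obtain the second equality by applying the recurrence at level $N-1$ with $j$ shifted to $j-1$. Your version merely substitutes before summing rather than summing before substituting, which is an immaterial reordering.
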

\begin{proof}
Start with the Pascal relation Proposition \ref{prop:Pascal}, (ii), with $r=1$ and $N\rightarrow N-1$
$$\FF {N-1}{n  \;j} -\FF {N-1}{n-1  \;j} = \FF {N}{n\;j+1}$$
summing from $0$ to $m$ yields
\begin{equation}\label{eq:colsum}
\FF{N-1}{m\;j}=\sum_{n=0}^m \FF {N}{n\;j+1}
\end{equation}
and similarly with $j-2$ replacing $j$. Thus, summing the recurrence relation \eqref{eq:rrec} over $n$ we have
$$\sum_{n=0}^m (N-2n)\FF N{n\;j}= (N-j)\sum_{n=0}^m\FF N{n\;j+1}+j \sum_{n=0}^m\FF N{n\;j-1} $$
and using \eqref{eq:colsum} with $j$ adjusted accordingly yields the first equality. The second follows by applying the right hand side of the recurrence formula \eqref{eq:rrec}
with $N\rightarrow N-1$.
\end{proof}

Theorem \ref{thm:sqsum} takes the form
\begin{theorem}
For the symmetric Krawtchouk polynomials we have the sum of squares identity
$$\sum_{n=0}^m (N-2n)(\FF N{n\;j})^2=(N-j)(\FF {N-1}{m\;j})^2+j(\FF {N-1}{m\;j-1})^2\ .$$
\end{theorem}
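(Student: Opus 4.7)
The statement is the $r=1$ specialization of Theorem~\ref{thm:sqsum}, so the shortest plan is simply to substitute $r=1$ and read off the result. Under this substitution the values $\ff{N}{n\;j}$ become $\FF{N}{n\;j}$ by definition of the symmetric case, the coefficient $rj$ on the right becomes $j$, and, crucially, the prefactor $(1-r)/(1+r)$ in front of the residual sum on the right of Theorem~\ref{thm:sqsum} vanishes. What remains is exactly the claimed identity.

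If one prefers to avoid invoking the general identity, a self-contained derivation in the symmetric case is appreciably cleaner and the plan would be to mirror the three-step argument preceding Theorem~\ref{thm:sqsum} but with $r=1$ throughout. First, multiplying parts (i) and (ii) of Proposition~\ref{prop:Pascal} with $N\to N-1$ gives immediately
\[
\FF{N}{n\;j+1}\,\FF{N}{n\;j} = (\FF{N-1}{n\;j})^2 - (\FF{N-1}{n-1\;j})^2,
\]
because the cross-term weight $(1-r)/2$ that survived in the general derivation is now zero. The analogous identity with $j$ replaced by $j-1$ follows by shifting indices.

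Second, I would multiply the symmetric recurrence \eqref{eq:rrec} through by $\FF{N}{n\;j}$ to produce $(N-2n)(\FF{N}{n\;j})^2$ on the left and, on the right, the two cross-products $\FF{N}{n\;j+1}\FF{N}{n\;j}$ and $\FF{N}{n\;j-1}\FF{N}{n\;j}$ weighted by $N-j$ and $j$ respectively. Substituting the factorizations from the previous step replaces each cross-product with a telescoping difference of squares. Summing $n$ from $0$ to $m$ collapses the telescopes to the single boundary values $(\FF{N-1}{m\;j})^2$ and $(\FF{N-1}{m\;j-1})^2$, yielding the stated identity.

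There is no genuine obstacle here: the whole point of specializing to $r=1$ is that the cross term in the product of the two Pascal relations disappears, so no residual $(1-r)/(1+r)$ sum is generated and no additional manipulation is needed to absorb it. The only small care required is to track the boundary condition $\FF{N}{-1\;j}=0$ when evaluating the telescoping sum at $n=0$, which is guaranteed by the conventions in Proposition~\ref{prop:Pascal}.
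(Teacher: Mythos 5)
Your primary route is exactly what the paper does: the theorem is stated as the $r=1$ specialization of Theorem~\ref{thm:sqsum}, under which the coefficient $rj$ becomes $j$ and the residual sum with prefactor $(1-r)/(1+r)$ vanishes, leaving the claimed identity. Your alternative self-contained derivation is also correct --- with $r=1$ the product of the two Pascal relations cleanly gives $\FF{N}{n\;j+1}\FF{N}{n\;j}=(\FF{N-1}{n\;j})^2-(\FF{N-1}{n-1\;j})^2$, and multiplying the recurrence \eqref{eq:rrec} by $\FF{N}{n\;j}$ and telescoping the sum over $n$ reproduces the result --- but this is just the paper's general argument streamlined at $r=1$, so no essentially new idea is involved.
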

\begin{subsection}{Special values. Catalan connection.}
\begin{proposition}
\begin{align}\label{eq:Cat1}
\Phi_{mj}^{2m} &=  \bbinom{m}{ j/2}\, (-1)^{j/2}\, \frac{\bbinom{2m}{m} }{\bbinom{2m}{j} } \ \text{for $j$ even\ }, \qquad 0 \text{\ for $j$ odd} \\
\mathstrut\nonumber \\
\Phi_{mj}^{2m+1} &= \bbinom{m}{ \lfloor j/2 \rfloor}\, (-1)^{\lfloor j/2\rfloor}\, \frac{\bbinom{2m+1}{m} }{\bbinom{2m+1}{j} }\ . \label{eq:Cat2}
\end{align}
\end{proposition}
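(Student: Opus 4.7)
The plan is to exploit the binomial conjugation identity \eqref{eq:binom}, which trades the index position we are asked about ($\Phi_{mj}^N$ with $m$ fixed in the row) for its transpose ($\Phi_{jm}^N$), whose generating function has a particularly clean factored form. Concretely, I would write
$$\Phi_{mj}^N = \binom{N}{j}^{-1}\binom{N}{m}\,\Phi_{jm}^N,$$
so the prefactor $\binom{N}{m}/\binom{N}{j}$ already accounts for the ratio of binomials appearing on the right-hand side of both formulas. It remains to compute $\Phi_{jm}^N$, namely the coefficient of $z^j$ in $(1+z)^{N-m}(1-z)^m$, for $N = 2m$ and $N = 2m+1$.

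For $N = 2m$ the generating function collapses to
$$(1+z)^m(1-z)^m = (1-z^2)^m = \sum_{k=0}^m (-1)^k\binom{m}{k}z^{2k},$$
so $\Phi_{jm}^{2m}$ vanishes for odd $j$ and equals $(-1)^{j/2}\binom{m}{j/2}$ for even $j$. Substituting into the conjugation formula yields \eqref{eq:Cat1}. For $N = 2m+1$ I would factor the generating function as
$$(1+z)^{m+1}(1-z)^m = (1+z)(1-z^2)^m,$$
and extract the coefficient of $z^j$ by splitting according to whether $j$ is even or odd. In either case, writing $j = 2\lfloor j/2\rfloor + \epsilon$ with $\epsilon\in\{0,1\}$, the contribution comes from the single term $(-1)^{\lfloor j/2\rfloor}\binom{m}{\lfloor j/2\rfloor}z^{2\lfloor j/2\rfloor}$ (multiplied by $1$ or by $z$), giving $\Phi_{jm}^{2m+1} = (-1)^{\lfloor j/2\rfloor}\binom{m}{\lfloor j/2\rfloor}$, and hence \eqref{eq:Cat2}.

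There is no real obstacle here: once the conjugation identity is invoked, the proof reduces to inspecting coefficients of the factorizations $(1-z^2)^m$ and $(1+z)(1-z^2)^m$. The only thing to be careful about is matching the sign and floor conventions correctly when $N$ is odd, since the extra factor $(1+z)$ shifts odd powers of $z$ to match the even-powered expansion without changing the sign or the binomial index.
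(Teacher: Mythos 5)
Your proposal is correct and follows essentially the same route as the paper: both reduce the claim to reading off the coefficients of $(1-z^2)^m$ and $(1+z)(1-z^2)^m$ from the generating function \eqref{eq:symmgf} and then applying the binomial conjugation identity \eqref{eq:binom} (the paper merely applies the conjugation after, rather than before, the coefficient extraction). No gaps.
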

\begin{proof}
Setting $N=2m$, $j=m$, the generating function, \eqref{eq:symmgf} becomes
\begin{align*}
(1+z)^m(1-z)^m&=(1-z^2)^m\\
&=\sum z^{2k}\binom{m}{k}(-1)^k=\sum z^\ell \FF{2m}{\ell\;m}
\end{align*}
hence the evaluation
$$
\FF{2m}{\ell\;m}=\binom{m}{\ell/2}(-1)^{\ell/2} \text{ for $\ell$ even }, \quad 0 \text{ for $\ell$ odd}
$$
and applying the binomial conjugation, eq.\,\eqref{eq:binom}, yields the result for $N=2m$.\\

For $N=2m+1$, $j=m$, we have
$$(1+z)(1-z^2)^m=\sum \left(z^{2k}\binom{m}{k}(-1)^k+z^{2k+1}\binom{m}{k}(-1)^k\right)$$
which yields
$$\Phi_{\ell m}^{2m+1} =\binom{m}{\lfloor \ell/2\rfloor}(-1)^{\lfloor \ell/2\rfloor}$$
and binomial conjugation completes the proof.
\end{proof}
\begin{remark}
Note that for $N$ even, these can be expressed as SuperCatalan numbers, in the terminology of \cite{GMT}, e.g., according to the relations
$$\binom{n}{k}\,\frac{\dbinom{2n}{n}}{\dbinom{2n}{2k}}=\frac{\dbinom{2n-2k}{n-k}\dbinom{2k}{k}}{\dbinom{n}{k}}
=\frac{(2n-2k)!\,(2k)!}{(n-k)!\,k!\,n!}$$ 
with $n$ replacing $m$ and $k$ replacing $j/2$ in \eqref{eq:Cat1}.
\end{remark}

Note that for $N=2m+1$, \eqref{eq:Cat2} yields for
the entry in the second column middle row
$$\Phi_{m 1}^{(2m+1)}=C_m=\frac{1}{m+1}\, \binom{2m}{m}\ . $$
the $m^{\rm th}$ Catalan number. In fact,
\begin{proposition} {\rm Catalan Connection}\\
We have the following evaluations involving Catalan	numbers.\\
1. For $N=2m$ even,
\begin{align*}
\Phi_{m-1,1}^{(2m)}&=C_m\\
\Phi_{m+1,1}^{(2m)}&=-C_m\\
\Phi_{m\;2}^{(2m)}&=-2C_{m-1}
\end{align*}

2. For $N=2m+1$ odd,
\begin{align*}
\Phi_{m\;1}^{(2m+1)}&=C_m\\
\Phi_{m\;2}^{(2m+1)}&=\Phi_{m+1\;1}^{(2m+1)}=\Phi_{m+1\;2}^{(2m+1)}=-C_m
\end{align*}

3. Reading  right-to-left along the rows yield evaluations correspondingly by sign symmetries.
\end{proposition}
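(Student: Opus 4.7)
My plan is to derive each of the listed values by combining three tools that are already in hand: the middle-row formulas \eqref{eq:Cat1} and \eqref{eq:Cat2} just proved, the explicit low-index entries $\FF{N}{1\,j}=N-2j$ and $\FF{N}{n\,0}=\binom{N}{n}$ from Proposition \ref{prop:props}(2), and the binomial conjugation \eqref{eq:binom} together with the row/column sign symmetries of Proposition \ref{prop:props}(1). Every entry named in the proposition is either a middle-row entry, a row-reflection of a middle-row entry, or a row-reflection of a known first-row entry, so no new machinery is needed; the work consists in checking that the resulting binomial expressions collapse to $\pm C_m$ or $\pm 2C_{m-1}$.

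For the odd case $N=2m+1$, I would specialize \eqref{eq:Cat2} at $j=1$ and $j=2$. At $j=1$ the factor $\binom{m}{0}=1$ leaves $\binom{2m+1}{m}/(2m+1)=\binom{2m}{m}/(m+1)=C_m$; at $j=2$ the factor $\binom{m}{1}=m$ cancels against $\binom{2m+1}{2}=m(2m+1)$, yielding $-C_m$. The two remaining entries in part~2 follow from the row sign symmetry $\FF{2m+1}{m+1\,j}=(-1)^j\FF{2m+1}{m\,j}$ (taking $i=m$ so $N-i=m+1$), which produces $\FF{2m+1}{m+1\,1}=-C_m$ and $\FF{2m+1}{m+1\,2}=-C_m$.

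For the even case $N=2m$, the column-one entries at rows $m\pm 1$ sit off the middle row, so I would compute $\FF{2m}{m-1\,1}$ via binomial conjugation \eqref{eq:binom} from $\FF{2m}{1\,m-1}=2m-2(m-1)=2$; using $\binom{2m}{m-1}=\tfrac{m}{m+1}\binom{2m}{m}$ this reduces cleanly to $\binom{2m}{m}/(m+1)=C_m$. Row sign symmetry with $i=m-1$ then gives $\FF{2m}{m+1\,1}=-C_m$. The last entry $\FF{2m}{m\,2}$ comes directly from \eqref{eq:Cat1} at $j=2$ as $-\binom{2m}{m}/(2m-1)$, which rearranges into $-2C_{m-1}$ by the one-line factorial identity $\binom{2m}{m}/(2m-1)=\tfrac{2}{m}\binom{2m-2}{m-1}$. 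Part~3 is then immediate: the column sign symmetry $\FF{N}{i\,N-j}=(-1)^i\FF{N}{i\,j}$ sends each listed entry to its right-to-left partner. I do not anticipate any genuine obstacle; the only step requiring care is keeping track of which of the three tools applies to each entry and carrying out the routine binomial simplifications that convert the ratios into Catalan form.
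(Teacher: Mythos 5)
Your proposal is correct: I checked each of the seven evaluations and the binomial simplifications all go through (in particular $\binom{2m}{m}/(2m-1)=\tfrac{2}{m}\binom{2m-2}{m-1}=2C_{m-1}$ and $\binom{2m+1}{m}/(2m+1)=C_m$ are right). Your route overlaps with the paper's on the $j=2$ middle-row entries, which both arguments read off from \eqref{eq:Cat1} and \eqref{eq:Cat2}, but it diverges elsewhere. The paper obtains \emph{all} the column-one entries directly from the explicit second-column formula \eqref{eq:columnone}, $\Phi^{N+1}_{n\,1}=\binom{N}{n}\frac{N+1-2n}{N+1-n}$, and handles the one remaining entry $\Phi^{(2m+1)}_{m+1\,2}$ by a Pascal step, $\Phi^{(2m)}_{m+1\,1}-\Phi^{(2m)}_{m\,1}=\Phi^{(2m+1)}_{m+1\,2}$, exploiting the vanishing of $\Phi^{(2m)}_{m\,1}$. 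You instead reach $\FF{2m}{m-1\,1}$ by binomial conjugation of the first-row value $\FF{2m}{1\,m-1}=2$, and you dispatch every entry in row $m+1$ at once via the row sign symmetry $\FF{N}{N-i\,j}=(-1)^j\FF{N}{i\,j}$. The trade-off is mild: your use of the sign symmetry is more uniform and makes the $\pm$ pattern transparent, while the paper's reliance on \eqref{eq:columnone} keeps all four column-one evaluations under a single formula and avoids the conjugation computation. Either way the proposition follows from tools already established, so this is a legitimate alternative proof rather than a gap.
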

\begin{proof}
The first two equations in \#1 follow from eq. \eqref{eq:columnone}. The third follows from \eqref{eq:Cat1}.\\

Similarly, for \#2, the column one evaluations follow from \eqref{eq:columnone} and the $\FF{2m+1}{m\;2}$ entry follows from \eqref{eq:Cat2}.
For $\FF{2m+1}{m+1\;2}$ ,  use Pascal as follows:
$$\Phi_{m+1\;1}^{(2m)}-\Phi_{m\;1}^{(2m)}=\Phi_{m+1\;2}^{(2m+1)}=-C_m$$
noting that $\Phi_{m\;1}^{(2m)}$ vanishes.
\end{proof}

\begin{remark} See \cite{FZ} for worksheets on Catalan numbers, listed up to $C_{20}$, and on Kravchuk matrices, listed up to $N=12$.
\end{remark}
\end{subsection}
\end{section}

\begin{section}{Dimensions of algebras over the Boolean lattice}

Let $\B$ denote the Boolean lattice of subsets of the standard $n$-set $\{1,2,\ldots,n\}$.
The layers, each consisting of subsets of cardinality $\ell$, are denoted $\B_\ell$. We identify
each element $i$ with a variable $e_i$, taken together forming the generators of a commutative algebra satisfying the conditions
$$e_i^2=0\ .$$
We call such variables \textit{zeons}.\\

A subset $\I\in\B_\ell$ is identified with the product
$$e_\I=e_{i_1}\cdots e_{i_\ell}$$
for $\I=\{i_1,\ldots,i_\ell\}$.\\

We will consider some algebras generated by the zeons and determine their
structure. See \cite{F1} for a full account.\\

Start with  the linear operator $\hat e_i$ of multiplication by $e_i$. 
$$ \hat e_i\, e_\I=\begin{cases} e_{\{i\}\cup \I}, & \text{if } i\notin\I\\
0, &\text{otherwise}
\end{cases}
$$

And for the dual basis $\{\delta_i\}$, the action of $\delta_i$ is given by the linear operator $\hat\delta_i$ defined by
$$ \hat \delta_i\, e_\I=\begin{cases} e_{\I\,\setminus\,\{i\}},  & \text{if } i\in\I\\
0, &\text{otherwise}
\end{cases}
$$

For convenience we will drop the $\hat{}$ notations.
With the standard inner product $\langle e_\I,e_\J \rangle=\delta_{\I \J}$, one checks that
$$\langle e_ie_\I,e_\J \rangle = \langle e_\I, e_i^*e_\J \rangle=\langle e_\I,\delta_ie_\J \rangle$$
the ${}^*$ indicating adjoint with respect to the inner product. We define the operator $T=\sum_i e_i$
and its adjoint 
$$T^*=\sum_i\delta_i=\sum e_i^*$$
Their commutator
$$U=[T^*,T]$$
has matrix elements
$$ U_{\I\J}=\begin{cases} n-2\ell, & \mathrm{ if }\  \I=\J \in\B_\ell  \\
0,& \mathrm{ otherwise }
\end{cases}
$$

Recall that a finite-dimensional ${}^*$-algebra is a direct sum of matrix algebras. 
Our goal here is to find the form of some algebras generated by these operators as direct sums of matrix algebras.
The description is provided by four numbers. The algebra consists of a direct sum of $m_i$ copies of matrix algebras of degree $d_i$. \bigskip

$d$ = Degree of the algebra = $\displaystyle \sum m_id_i$\bigskip

$\delta$ = Dimension of the algebra = $\displaystyle \sum d_i^2$ \bigskip

$\zeta$ = Dimension of the centralizer = $\displaystyle \sum m_i^2$ \bigskip

$z$ = Dimension of the center = the number of components of the decomposition \\

\begin{remark}
See, e.g., \cite[Ch.\,1]{Sag}, for an exposition in the context of group algebras.
\end{remark}

The calculations, although accessible by elementary means, are done illustrating the connection with Kravchuk matrices.\\

In the discussion below, we write $N=n+1$. \hfill\break 
For $N$ even, write $N=2m$, for $N$ odd, write $N=2m+1$. And
$$\left\lfloor \frac{n}{2} \right\rfloor =\begin{cases} m-1, &\text{ for $N$ even }\\ m, &\text{for $N$ odd}\end{cases}$$

\begin{subsection}{Algebra generated by $U$}
Since $U$ is diagonal, all of the $d_i=1$ and we only need to determine $m_i$. Since the $m_i$ are exactly given by the number
of sets in each layer, we see that the $m_i$ are precisely the binomial coefficients. See Appendix page for $U$.\\

Note that this is the "horizontal description" of $\mathcal{B}(n)$.\\

We have immediately
\begin{align*}
d&=2^n, \text{ degree of the algebra}\\
\delta&=n+1, \text{ dimension of the algebra}\\
          z&=n+1, \text{ dimension of the center}
\end{align*}
And we have the dimension of the centralizer
$$\zeta=\sum_i \binom{n}{i}^2=\binom{2n}{n}\ .$$
\end{subsection}

\begin{subsection}{Algebra generated by $T$ and $T^*$}
\hfill\\

This is the "vertical description" of $\mathcal{B}(n)$.\\

The $d_i$ may be seen directly to be $n+1-2\alpha$, where $0\le \alpha\le \lfloor n/2\rfloor$.
The $m_i$ are correspondingly given by $\binom{n}{\alpha}-\binom{n}{\alpha-1}$, with $m_0=1$.
See Appendix page for $n=4$ as well as that for  $T$ and $T^*$.\\

For the degree, we have
$$
\sum_{\alpha=0}^{\lfloor n/2\rfloor} \left[\binom{n}{\alpha}-\binom{n}{\alpha-1}\right]\,(n+1-2\alpha)
$$
Using the relation $(\Phi^N)^2=2^N\,I$, write this as
$$\sum_{\alpha=0}^m \FF{N}{1\;\alpha}\FF{N}{\alpha\;1}=\frac12\,(\FF{N}{1\;1})^2=\frac12\,2^N=2^n$$
appropriately.\\

And we have

\begin{align*}
d&=2^n, \text{ degree of the algebra}\\
\delta&=\sum_{\alpha=0}^{\lfloor n/2\rfloor} (n+1-2\alpha)^2=\binom{n+3}{3} \\
          z&=1+\lfloor n/2\rfloor, \text{ dimension of the center}
\end{align*}

\textsl{Proof of the formula for $\delta$:}\\

By sign symmetry, we have, using eq. \eqref{eq:rowsquares},
\begin{align*}
\sum_{\alpha=0}^m (n+1-2\alpha)^2&=\frac12\,\sum (\FF{N}{1\;\alpha})^2=\frac12\,\left[(N+1)N+2\,\binom{N+1}{3}\right] \\
&=\binom{N+2}{3}
\end{align*}
as required.\\

The dimension of the centralizer
$$\zeta=\sum_{\alpha=0}^{\lfloor n/2\rfloor} \left[\binom{n}{\alpha}-\binom{n}{\alpha-1}\right]^2=\frac{1}{n+1}\,\binom{2n}{n}=C_n\ .$$

\textsl{Proof of the formula for $\zeta$:}\\

Using eq. \eqref{eq:colsquares}, we have	
$$\sum_{\alpha=0}^m (\FF{N}{\alpha\;1})^2=\frac12\,\frac{\binom{2N-2}{N-1}\binom{2}{1}}{\binom{N}{1}}=C_{N-1}=C_n$$
as required.

\end{subsection}

\begin{subsection}{Algebra generated by $TT^*$ and $T^*T$}
\hfill\\

If the $\mathcal{A}(T,T^*)$ has $m_i$ and $d_i$, then here we have $d_i$ copies of $m_i$ all with $d=1$.\\

\begin{align*}
d&=2^n, \text{ degree of the algebra}\\
\delta&=\sum_{\alpha=0}^{\lfloor n/2\rfloor} (n+1-2\alpha)=
\begin{cases}
 (n+2)^2/4,&\text{if } $n$ \text{ is even}\\
\mathstrut \\
 (n+1)(n+3)/4&\text{if } $n$ \text{ is odd}\\
\end{cases}\\
          z&=1+\lfloor n/2\rfloor, \text{ dimension of the center}
\end{align*}

\textsl{Proof of the formula for $\delta$:}\\
\begin{align*} 
\sum_{\alpha=0}^{\lfloor\frac{n}{2}\rfloor} (n+1-2\alpha) &=\sum_0^m (N-2\alpha)\\
&=N(m+1)-m(m+1)=(m+1)(N-m)
\end{align*}
For $N$ odd we get $(m+1)^2=((N+1)/2)^2$. Even $N$ yields
$$(1+N/2)(N/2)=\frac{N(N+2)}{4}$$
as required.\\

And we have the dimension of the centralizer
$$\zeta=\sum_{\alpha=0}^{\lfloor n/2\rfloor} (n+1-2\alpha)\,\left[\binom{n}{\alpha}-\binom{n}{\alpha-1}\right]^2=
\begin{cases}
\displaystyle \binom{n}{n/2}^2,&\text{if } $n$ \text{ is even}\\
\mathstrut \\
\displaystyle 2\binom{n}{\lfloor n/2\rfloor}\binom{n-1}{\lfloor n/2\rfloor},&\text{if } $n$ \text{ is odd}\\
\end{cases}\ .$$
\end{subsection}

\textsl{Proof of the formula for $\zeta$:}\\

Using Theorem \ref{thm:sqsum}, we have
\begin{align*}
\sum_{\alpha=0}^m (N-2\alpha)\,(\FF{N}{\alpha\;1})^2&=(N-1)(\FF{N-1}{m\;1})^2+(\FF{N-1}{m\;0})^2\\
&=(N-1)\,\left( \binom{N-2}{m}-\binom{N-2}{m-1}\right)^2+\binom{N-1}{m}^2\\
&=(N-1)\binom{N-2}{m}^2\left(\frac{N-1-2m}{N-1-m}\right)^2+\binom{N-1}{m}^2\ .
\end{align*}
If $N$ is odd, $n=N-1=2m$ and we get $\dbinom{n}{n/2}^2$. If $N$ is even, substituting $N=2m$ and simplifying gives
$$2\,\binom{2m-1}{m-1}\,\binom{2m-2}{m-1}$$
and $n=N-1=2m-1$, $m-1=\lfloor n/2\rfloor$ yields the result.\\

\end{section}

\bibliographystyle{plain}

\newpage

\begin{appendix}
\begin{figure}
\centering
\hspace*{-1.3in}\includegraphics{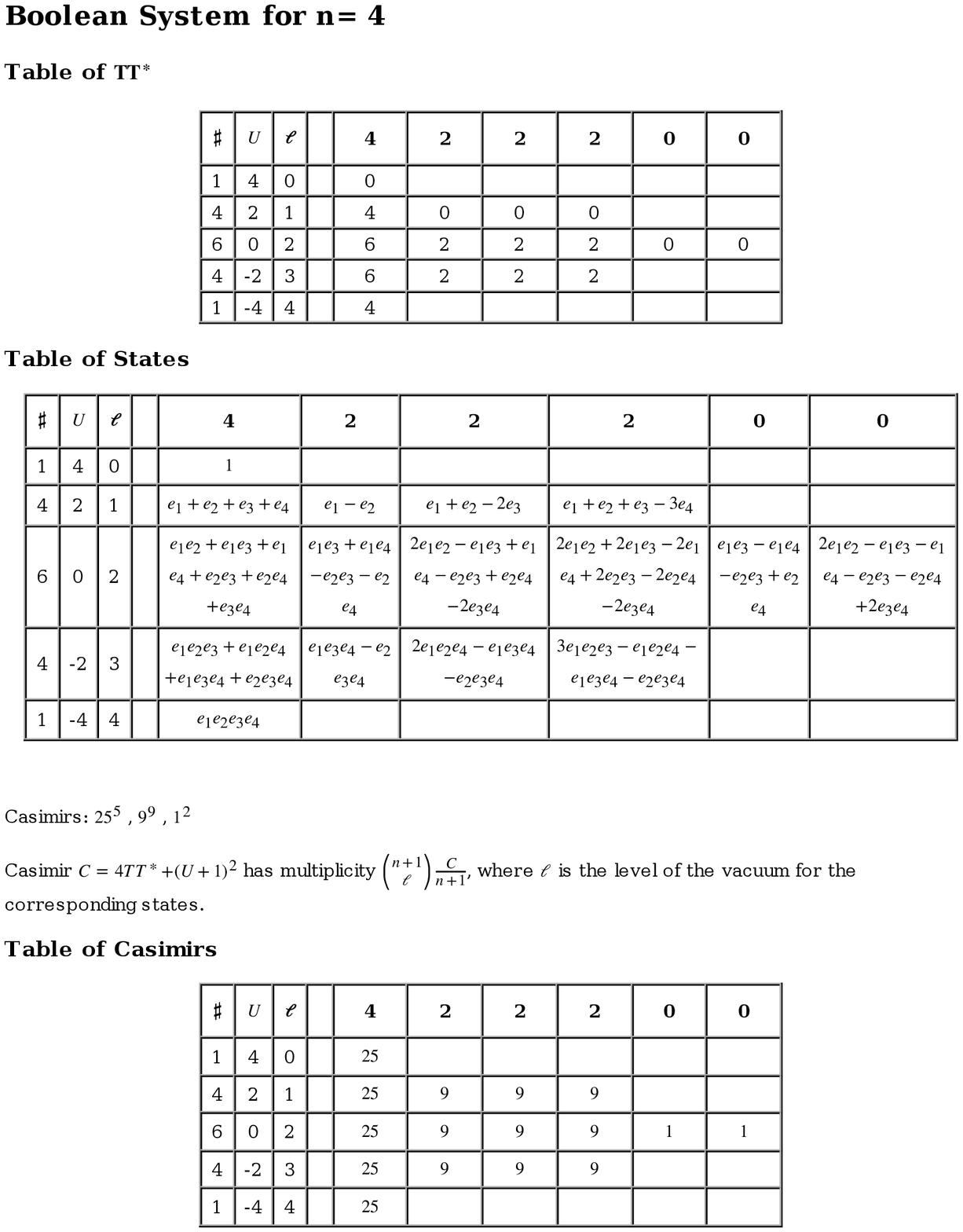}
\end{figure}

\newpage

\begin{figure}
\centering
\hspace*{-1.3in}\includegraphics{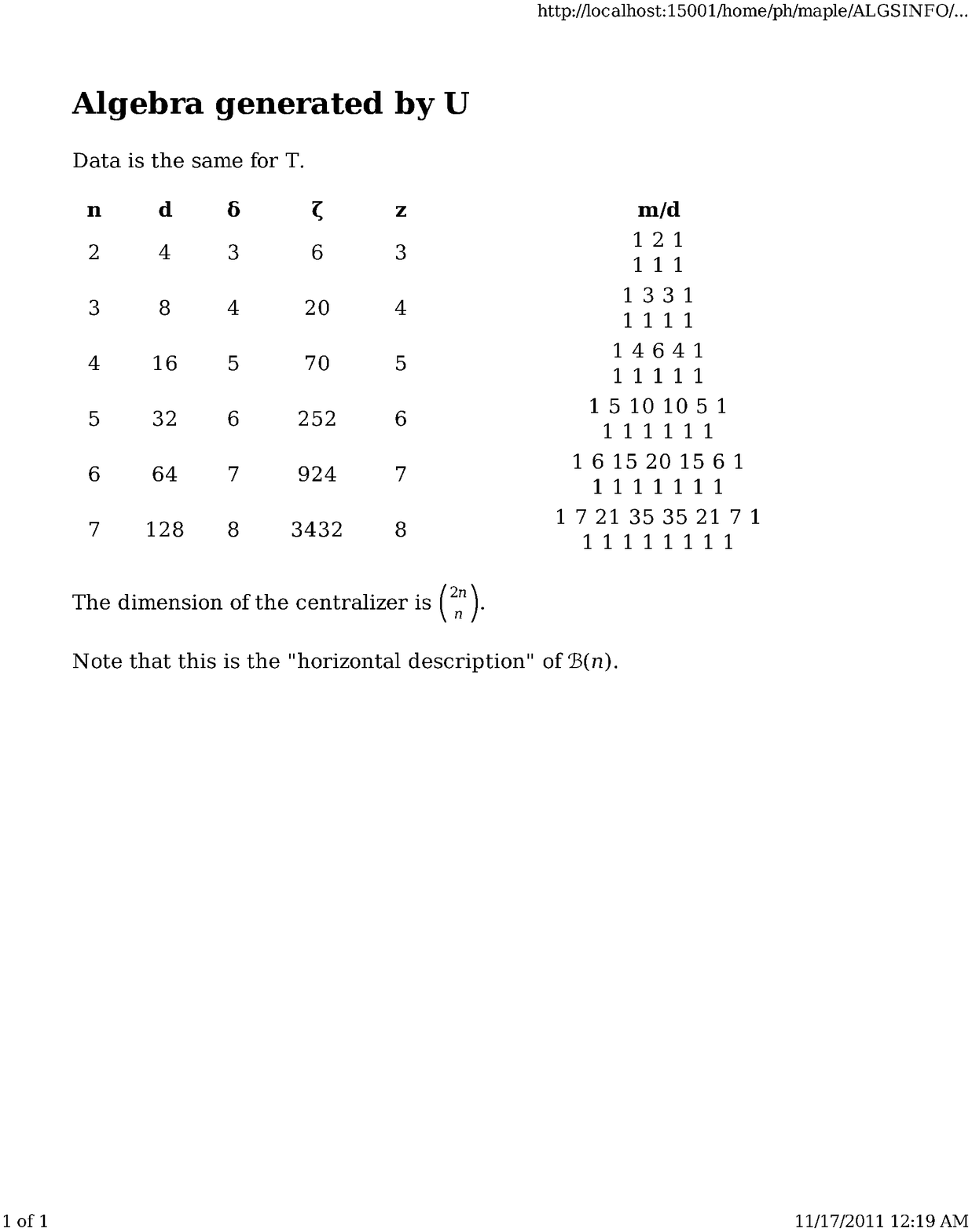}
\end{figure}

\newpage

\begin{figure}
\centering
\hspace*{-1.3in}\includegraphics{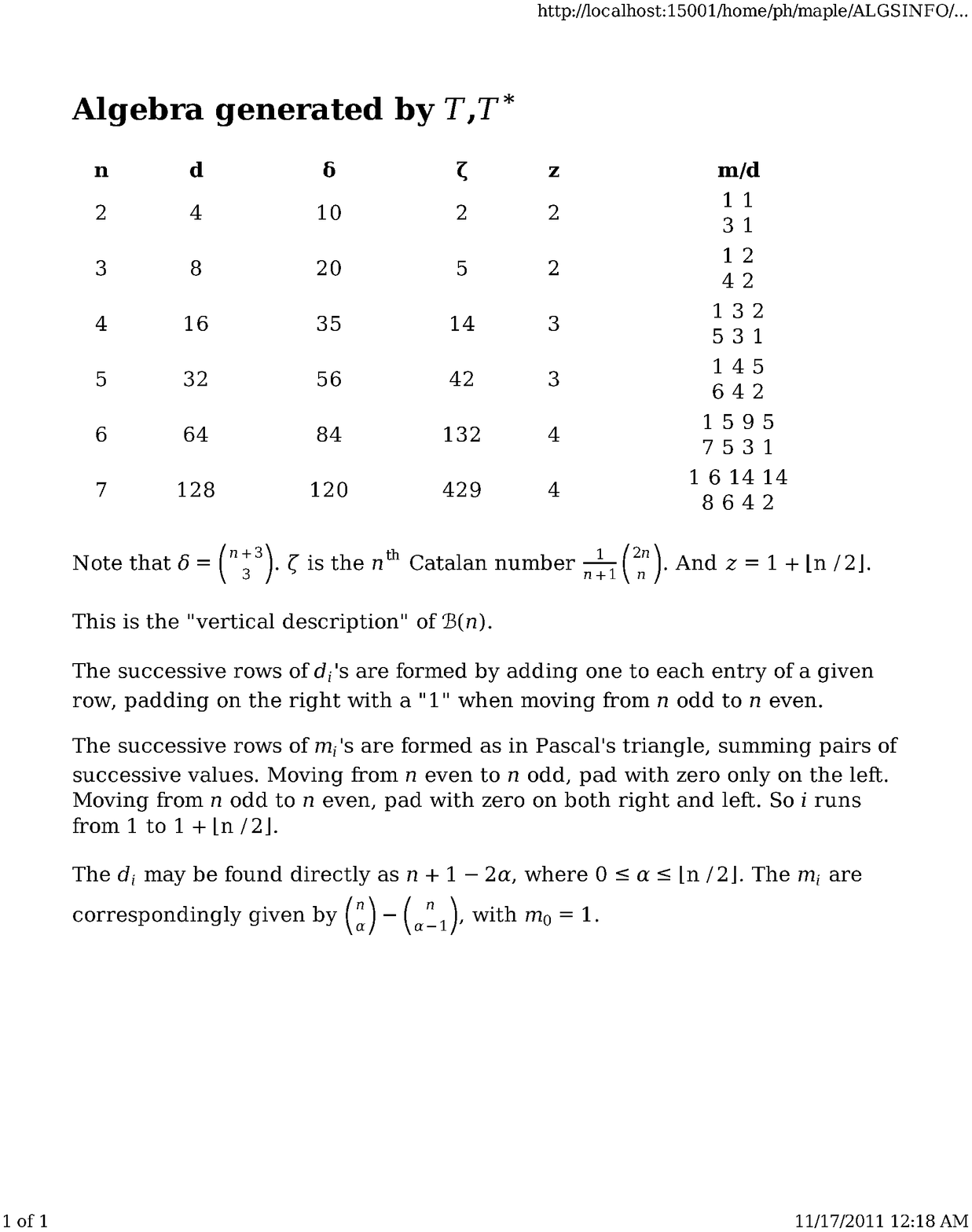}
\end{figure}

\newpage

\begin{figure}
\centering
\hspace*{-1.3in}\includegraphics{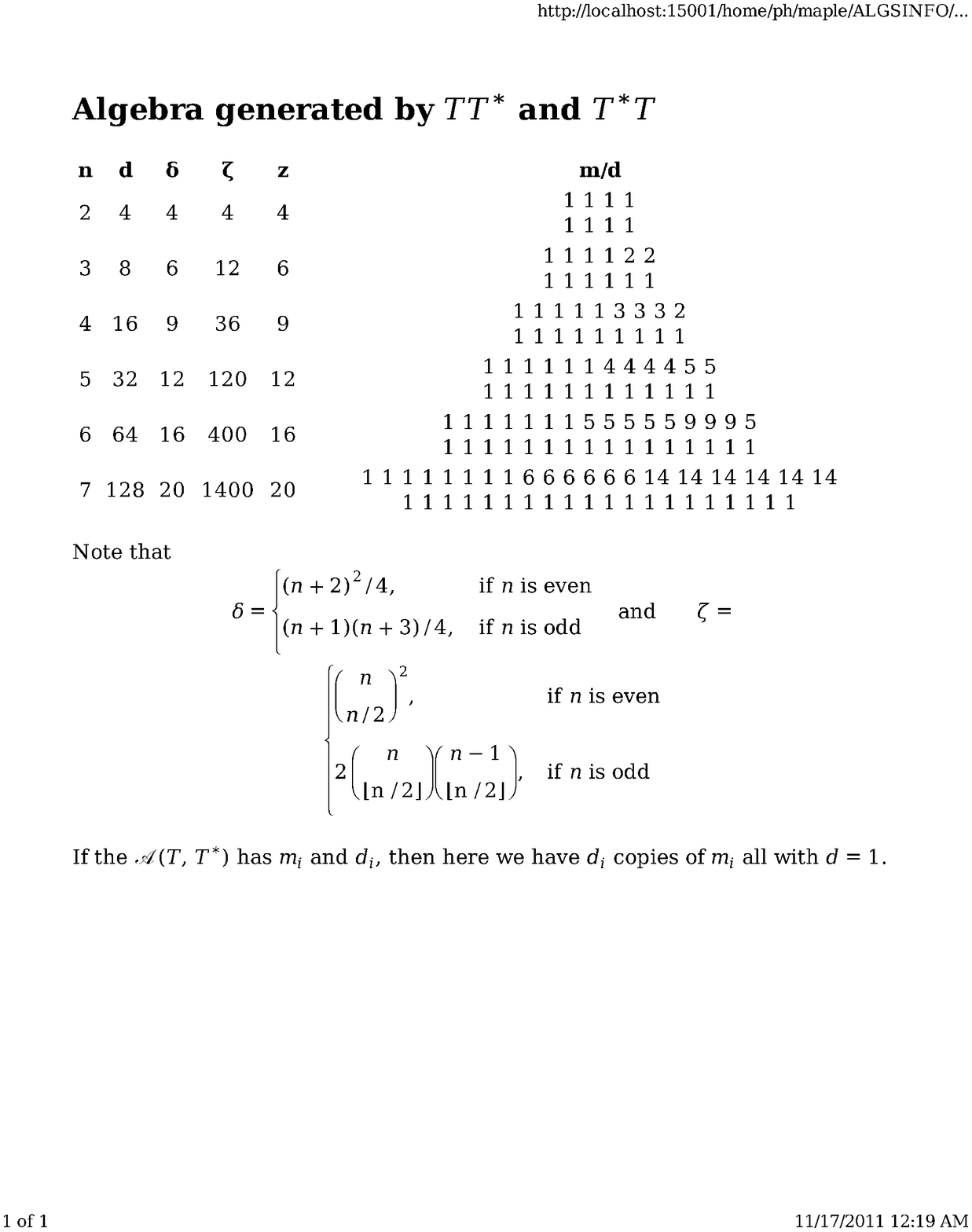}
\end{figure}

\newpage

\end{appendix}

\end{document}